% post ref 10/1/10

\documentclass[12pt,psamsfonts]{amsart}
\usepackage{amsmath,amsthm,amsfonts,amssymb}
\usepackage{graphicx}
\usepackage[all,knot]{xy}
\xyoption{arc}

\addtolength{\evensidemargin}{-.4in}
\addtolength{\oddsidemargin}{-.4in}
\addtolength{\textwidth}{.8in}
\usepackage{eucal}

\newcommand{\A}{\mathcal{A}}
\newcommand{\D}{\mathcal{D}}

\newcommand{\ep}{\epsilon}
\newcommand{\om}{\omega}
\newcommand{\lan}{\langle}
\newcommand{\ra}{\rangle}

\newcommand{\F}{\mathcal{F}}
\newcommand{\ssl}{\mathfrak{sl}}

\newcommand{\mF}{\mathbb{F}}

\newcommand{\la}{{\lambda}}

\DeclareMathOperator{\Rep}{Rep}
\DeclareMathOperator{\Ann}{Ann}

\DeclareMathOperator{\End}{End}

\DeclareMathOperator{\im}{i}

\DeclareMathOperator{\tr}{tr}
\DeclareMathOperator{\Tr}{Tr}

\newcommand{\one}{\mathbf{1}}
\newcommand{\C}{\mathbb C}
\newcommand{\CC}{\mathcal{C}}
\newcommand{\Z}{\mathbb Z}
\newcommand{\ot}{\otimes}
\newcommand{\B}{\mathcal{B}}
\newcommand{\mH}{\mathcal{H}}

\numberwithin{equation}{section}

\newtheorem{theorem}{Theorem}[section]

\newtheorem{lemma}{Lemma}[section]

\newtheorem{question}[equation]{Question}

\theoremstyle{definition}

\newtheorem{remark}{Remark}[section]

\newtheorem{definition}[equation]{Definition}

\begin{document}

\title[Quaternionic braid representation]
{A quaternionic braid representation (after Goldschmidt and Jones)}

\author{Eric C.\ Rowell}
\address{Department of Mathematics,
    Texas A\&M University, College Station, TX \textup{77843}, U.S.A.}
   \email{rowell@math.tamu.edu}
\thanks{The author was partially supported by NSA grant H98230-10-1-0215 and benefitted from discussions with V. F. R. Jones, M. J. Larsen, V. Ostrik, D. Nikshych, F. Sottile and H. Wenzl.  Handwritten notes by D. Goldschmidt and V. Jones were very useful.}
\subjclass[2000]{Primary 20F36; Secondary 20C08, 57M25}
\keywords{braid group, Hecke algebra, modular category}
\begin{abstract}
We show that the braid group representations associated with the $(3,6)$-quotients of the Hecke algebras factor over a finite group.  This was known to experts going back to the 1980s, but a proof has never appeared in print.  Our proof uses an unpublished quaternionic representation of the braid group due to Goldschmidt and Jones.  Possible topological and categorical generalizations are discussed.
\end{abstract}
\maketitle

\section{Introduction}

Jones analyzed the images of the braid group representations obtained from Temperley-Lieb algebras in \cite{jones86} where, in particular, he determined when the braid group images are finite or not.  Braid group representations with finite image were also recognized in \cite{jones89} and \cite{GJ}.  Some 15 years later the problem of determining the closure of the image of braid group representations associated with Hecke algebras played a critical role in analyzing the computational power of the topological model for quantum computation \cite{FLW}.  Following these developments the author and collaborators analyzed braid group representations associated with $BMW$-algebras \cite{LRW} and twisted doubles of finite groups \cite{ERW}.

Partially motivated by empirical evidence the author conjectured that the braid group representations associated with an object $X$ in a braided fusion category $\CC$ has finite image if, and only if, the Frobenius-Perron dimension of $\CC$ is integral (see eg. \cite[Conjecture 6.6]{RSW}).  In \cite{NR,RUMA} various instances of this conjecture were verified.  This current work verifies this conjecture for the braided fusion category $\CC(\ssl_3,6)$ obtained from the representation category of the quantum group $U_q\ssl_3$ at $q=e^{\pi \im/6}$ (see \cite{Rsurvey} for details and notation).

More generally, Jimbo's \cite{Jm} quantum Schur-Weyl duality establishes a relationship between the modular categories $\CC(\ssl_k,\ell)$ obtained from the quantum group $U_q\ssl_k$ at $q=e^{\pi \im/\ell}$ and certain semisimple quotients $\mH_n(k,\ell)$ of specialized Hecke algebras $\mH_n(q)$ (defined below).
That is, if we denote by $X\in\CC(\ssl_k,\ell)$ the simple object analogous to the vector representation of $\ssl_k$ then there is an isomorphism $\mH_n(k,\ell)\cong\End(X^{\ot n})$ induced by $g_i\rightarrow I_X^{\ot i-1}\ot c_{X,X}\ot I^{\ot n-i-1}$.

In particular, the braid group representations associated with the modular category $\CC(\ssl_3,6)$ are the same as those obtained from $\mH_n(3,6)$.
It is known that braid group representations obtained from $\mH_n(3,6)$ have finite image (mentioned in \cite{FLW,LR,NR}) but a proof has never appeared in print.  This fact was discovered by Goldschmidt and Jones during the writing of \cite{GJ} and independently by Larsen during the writing of \cite{FLW}.  We benefitted from the notes of Goldschmidt and Jones containing the description of the quaternionic braid representation below.  Our techniques follow closely those of \cite{jones86,jones89,LR2}.

The rest of the paper is organized into three sections.  In Section \ref{Hecke} we recall some notation and facts about Hecke algebras and their quotients.  The main results are in Section \ref{quat}, and in Section \ref{disc} we indicate how the category $\CC(\ssl_3,6)$ is exceptional from topological and categorical points of view.

\section{Hecke Algebras}\label{Hecke}
We extract the necessary definitions and results from \cite{W} that we will need in the sequel.
\begin{definition}
The \emph{Hecke algebra} $\mH_n(q)$ for $q\in\C$ is the $\C$-algebra with generators $g_1,\ldots, g_{n-1}$ satisfying relations:

\begin{enumerate}
\item[$(H1)^\prime$] $g_ig_{i+1}g_i=g_{i+1}g_ig_{i+1}$ for $1\leq i\leq n-2$
\item[$(H2)^\prime$] $g_ig_j=g_jg_i$ for $|i-j|>1$
 \item[$(H3)^\prime$] $(g_i+1)(g_i-q)=0$
\end{enumerate}
\end{definition}

Technically, $\mH_n(q)$ is the Hecke algebra of type $A$, but we will not be considering other types so we suppress this distinction.  One immediately observes that $\mH_n(q)$ is the quotient of the braid group algebra $\C\B_n$ by the relation $(H3)^\prime$.
$\mH_n(q)$ may also be described in terms of the generators $e_i:=\frac{(q-g_i)}{(1+q)}$, which satisfy:
\begin{enumerate}
 \item[$(H1)$] $e_i^2=e_i$
 \item[$(H2)$] $e_ie_j=e_je_i$ for $|i-j|>1$
 \item[$(H3)$] $e_ie_{i+1}e_i-q/(1+q)^2e_i=e_{i+1}e_{i}e_{i+1}-q/(1+q)^2e_{i+1}$ for $1\leq i\leq n-2$
\end{enumerate}

For any $\eta\in\C$, Ocneanu \cite{FYHLMO} showed that one may uniquely define a linear functional $\tr$ on $\mH_\infty(q):=\cup_{n=1}^\infty \mH_n(q)$
satisfying
\begin{enumerate}
 \item $\tr(1)=1$
\item $\tr(ab)=\tr(ba)$
\item $\tr(xe_n)=\eta\tr(x)$ for any $x\in\mH_n(q)$
\end{enumerate}
Any linear function on $\mH_\infty$ satisfying these conditions is called a \emph{Markov trace} and is determined by the value $\eta=\tr(e_1)$.

Now suppose that $q=e^{2\pi i/\ell}$ and $\eta=\frac{(1-q^{1-k})}{(1+q)(1-q^k)}$ for some integers $k<\ell$.  Then for each $n$, the (semisimple) quotient of $\mH_n(q)$ by the annihilator of the restriction of the trace $\mH_n(q)/\Ann(\tr)$ is called the \textit{$(k,\ell)$-quotient}.  We will denote this quotient by $\mH_n(k,\ell)$ for convenience.  Wenzl \cite{W} has shown that $\mH_n(k,\ell)$ is semisimple and described the irreducible representations $\rho_\la^{(k,\ell)}$ where $\la$ is a \emph{$(k,\ell)$-admissible} Young diagrams of size $n$.  Here a Young diagram $\la$ is $(k,\ell)$-admissible if $\la$ has at most $k$ rows and $\la_1-\la_k\leq \ell-k$ where $\la_i$ denotes the number of boxes in the $i$th row of $\la$.  The (faithful) Jones-Wenzl representation is the sum: $\rho^{(k,\ell)}=\bigoplus_\la \rho_\la^{(k,\ell)}$.    Wenzl \cite{W} has shown that $\rho^{(k,\ell)}$ is a $C^*$ representation, i.e. the representation space is a Hilbert space (with respect to a Hermitian form induced by the trace $\tr$) and $\rho_\la^{(k,\ell)}(e_i)$ is a self-adjoint operator.
One important consequence is that each $\rho_\la^{(k,\ell}$ induces an irreducible unitary representation of the braid group $\B_n$ via composition with $\sigma_i\rightarrow g_i$, which is also called the Jones-Wenzl representation of $\B_n$.

\section{A Quaternionic Representation}\label{quat}
Consider the $(3,6)$-quotient $\mH_n(3,6)$.  The $(3,6)$-admissible Young diagrams have at most $3$ rows and $\la_1-\la_3\leq 3$.  For $n\geq 3$ there are either $3$ or $4$ Young diagrams of size $n$ that are $(3,6)$-admissible, and  $\eta=\frac{(1-q^{1-3})}{(1+q)(1-q^3)}=1/2$ in this case.
Denote by $\phi_n$ the unitary Jones-Wenzl representation of $\B_n$ induced by $\rho^{(3,6)}$.  Our main goal is to prove the following:
\begin{theorem}\label{mainthm}
The image $\phi_n(\B_n)$ is a finite group.
\end{theorem}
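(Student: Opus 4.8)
The plan is to realize $\phi_n$ through an explicit quaternionic model of $\B_n$ due to Goldschmidt and Jones and then to run the argument of \cite{jones86}: to exhibit inside the image a finite normal subgroup whose quotient is again finite. As a first reduction, recall from Section~\ref{Hecke} that $\phi_n$ is conjugate to $\bigoplus_\la\bigl(\rho_\la^{(3,6)}\circ(\sigma_i\mapsto g_i)\bigr)$, the sum running over the (at most four) $(3,6)$-admissible Young diagrams of size $n$, so that $\phi_n(\B_n)$ embeds in the product $\prod_\la\rho_\la^{(3,6)}(\B_n)$; it thus suffices to prove each factor finite. The one-row diagram $\la=(n)$ is $(3,6)$-admissible only for $n\le 3$ and yields a one-dimensional representation $\sigma_i\mapsto q$ with cyclic image, so the content lies in the remaining $\la$, which all have dimension at least $2$.

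I would then write down the Goldschmidt--Jones representation $\psi_n$ of $\B_n$, in which $\psi_n(\sigma_i)$ equals a fixed root of unity times a quaternionic matrix of finite order that acts as the identity off a two-dimensional ``strand'' subspace. One verifies directly that these images satisfy $(H1)'$--$(H3)'$ with parameter $q=e^{\pi \im/3}$, and that the Markov trace on the resulting tower of (scalar-extended) quaternionic matrix algebras takes the value $\eta=\tfrac12$; by the uniqueness of the Markov trace with a prescribed parameter, together with Wenzl's classification of the irreducible $C^*$-constituents of $\rho^{(3,6)}$, the representation $\psi_n$ is then equivalent, as a complex representation, to the sum of those $\rho_\la^{(3,6)}\circ(\sigma_i\mapsto g_i)$ of dimension at least $2$. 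Theorem~\ref{mainthm} is thereby reduced to showing that $\psi_n(\B_n)$ is finite.

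The finiteness of $\psi_n(\B_n)$ I would establish exactly as in \cite{jones86,jones89,LR2}. The strand-local form of the generators shows that $\psi_n(\B_n)$ normalizes a finite group $E$ --- the quaternionic analogue of a Pauli group, generated by the quaternion units $\pm1,\pm i,\pm j,\pm k$ placed diagonally on the strands (a central product of copies of $Q_8$, of order a power of $2$ growing with $n$), together with finitely many further explicit finite-order matrices permuting or re-signing the strands and with the scalar $q$. The essential point, the quaternionic counterpart of the fact that the braiding generators at the $\delta=\sqrt2$ point of Temperley--Lieb act as Clifford gates, is that conjugation by each $\psi_n(\sigma_i)$ is an automorphism of $E$, so that $E\trianglelefteq\psi_n(\B_n)$. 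Conjugation then gives a homomorphism $\psi_n(\B_n)\to\operatorname{Aut}(E)$ whose image lands in a finite classical group (a symplectic or orthogonal group over $\mF_2$ or $\mF_3$) and whose kernel is the group of scalars in $\psi_n(\B_n)$, which is finite, being generated by the determinant phases of the generators. An extension of a finite group by a finite group is finite; hence $\psi_n(\B_n)$, and therefore $\phi_n(\B_n)$, is finite.

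I expect the real work to lie at the two interfaces of this argument. First, one must pin down the precise Goldschmidt--Jones matrices and verify that $\psi_n$ reproduces the $(3,6)$-quotient, and not merely some quaternionic representation satisfying the quadratic Hecke relation $(H3)'$: this is exactly where the rigidity of the Markov trace and Wenzl's structure theorem for $\mH_n(3,6)$ do the work. Second --- and this I expect to be the principal obstacle --- one must check that every braiding generator $\psi_n(\sigma_i)$ genuinely normalizes $E$ (the ``Clifford gate'' property), and, if the precise isomorphism type of $\phi_n(\B_n)$ is wanted rather than mere finiteness, carry out an inductive bookkeeping of the strands as $n$ grows in order to identify $E$ and the finite quotient. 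Both amount to finite but delicate computations with $2\times 2$ matrices over the quaternions.
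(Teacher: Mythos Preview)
Your proposal is correct and follows essentially the same route as the paper: the Goldschmidt--Jones quaternionic model is exactly the algebra $Q_n$ generated by anticommuting square roots of $-1$, your group $E$ is the finite $2$-group generated by the $u_i,v_i$, the ``Clifford gate'' verification is the paper's computation that conjugation by each $s_i$ acts as a signed permutation on monomials, and your Markov-trace identification of $\psi_n$ with the $(3,6)$-quotient is precisely Lemma~\ref{isolemma}. One small caveat: when $3\mid n$ the center of $Q_n$ is $4$-dimensional rather than scalar, so the kernel of the conjugation action is only block-scalar on the simple summands, and the finiteness of the kernel requires the determinant argument on each block separately---but this is a minor refinement of exactly the step you already isolate.
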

We will prove this theorem by embedding $\mH_n(3,6)$ into a finite dimensional algebra (Lemma \ref{isolemma}) and then showing that the group generated by the images of $g_1,\cdots,g_{n-1}$ is finite (Lemma \ref{finitelemma}).

Denote by $[\;,\;]$ the multiplicative group commutator and let $q=e^{2\pi \im/6}$.
Consider the $\C$-algebra $Q_n$ with generators
$u_1,v_1,\ldots,u_{n-1},v_{n-1}$ subject to the relations:
\begin{enumerate}
 \item[(G1)] $u_i^2=v_i^2=-1$,
\item[(G2)] $[u_i,v_j]=-1$ if $|i-j|\leq 1$,
\item[(G3)] $[u_i,v_j]=1$ if $|i-j|\geq 2$
\item[(G4)] $[u_i,u_j]=[v_i,v_j]=1$
\end{enumerate}
Notice that the group $\{\pm 1,\pm u_i,\pm v_i,\pm u_iv_i\}$ is isomorphic to the group of quaternions.
We see from these relations that $\dim(Q_n)=2^{2n-2}$ since each word in the $u_i,v_i$ has a unique normal form
\begin{equation}\label{normform}
 \pm u_1^{\ep_1}\cdots u_{n-1}^{\ep_{n-1}}v_1^{\nu_1}\cdots v_{n-1}^{\nu_{n-1}}
\end{equation}
with $\nu_i,\ep_i\in\{0,1\}$. Observe that a basis for $Q_n$ is given by taking all $+$ signs in (\ref{normform}).  We define a $\C$-valued trace $\Tr$ on $Q_n$ by setting $\Tr(1)=1$ and $\Tr(w)=0$ for any non-identity word in the $u_i,v_i$. One deduces that $\Tr$ is faithful from the uniqueness of the normal form (\ref{normform}).

Define
\begin{equation}\label{eqgj} s_i:=\frac{-1}{2q}(1+u_i+v_i+u_iv_i)\end{equation}
for $1\leq i\leq n-1$.

\begin{lemma}\label{isolemma}
 The subalgebra $\A_n\subset Q_n$ generated by $s_1,\ldots,s_{n-1}$ is isomorphic to $\mH_n(3,6)$.
\end{lemma}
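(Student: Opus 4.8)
The plan is to verify that the elements $s_i \in Q_n$ satisfy the defining relations of $\mH_n(q)$ at $q = e^{2\pi\im/6}$, and then match dimensions to conclude that $\A_n$ is not just a quotient of $\mH_n(q)$ but precisely the $(3,6)$-quotient. First I would carry out the routine (if tedious) computation that $s_i^2 = (q-1)s_i + q$, equivalently $(s_i+1)(s_i-q)=0$, using $(G1)$ and the fact that $1, u_i, v_i, u_iv_i$ span a copy of the quaternions: expanding $(1+u_i+v_i+u_iv_i)^2$ and using $u_i^2=v_i^2=-1$, $v_iu_i = -u_iv_i$ collapses the square to a scalar multiple of $(1+u_i+v_i+u_iv_i)$ plus a scalar, which after dividing by $4q^2$ gives $(H3)'$. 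Next, the far-commutativity $(H2)'$, $s_is_j=s_js_i$ for $|i-j|\geq 2$, is immediate from $(G3)$ and $(G4)$, since then $u_i,v_i$ each commute with $u_j,v_j$. The braid relation $(H1)'$, $s_is_{i+1}s_i = s_{i+1}s_is_{i+1}$, is the substantive identity: here one must expand both triple products in the $16$-dimensional subalgebra generated by $u_i,v_i,u_{i+1},v_{i+1}$, using $(G2)$ to move generators past each other at the cost of signs. I expect this to reduce, after collecting terms, to an identity that holds precisely because $q^2 = q-1$ (i.e. $q$ is a primitive sixth root of unity); this is the one place the specific value of $q$ enters, and it is the main obstacle — the bookkeeping of signs from the commutator relations is delicate and the cancellation is not obvious a priori.

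Having established the relations, we obtain a surjective algebra homomorphism $\psi\colon \mH_n(q) \twoheadrightarrow \A_n$ sending $g_i \mapsto s_i$. It remains to identify the kernel with $\Ann(\tr)$, so that $\A_n \cong \mH_n(q)/\Ann(\tr) = \mH_n(3,6)$. For this I would use the Markov trace characterization from Section \ref{Hecke}: pull back the faithful trace $\Tr$ on $Q_n$ (restricted to $\A_n$) along $\psi$ to get a trace $\tau$ on $\mH_n(q)$, and check that $\tau$ satisfies the three Markov conditions with the right parameter. Normalization $\tau(1)=1$ and the trace property are inherited from $\Tr$; the key computation is $\tau(xe_n) = \eta\,\tau(x)$ for $x \in \mH_n(q)$, where $e_n = (q-g_n)/(1+q)$ corresponds to $(q-s_n)/(1+q) = \frac{1}{2(1+q)}(2q + \frac{1}{q}(1+u_n+v_n+u_nv_n))$. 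Using that $\Tr$ kills every nonidentity word, and that $u_n, v_n$ (hence all their products with words in $u_1,v_1,\ldots,u_{n-1},v_{n-1}$) are nonidentity words orthogonal to $\A_{n}$-words not involving index $n$, one computes $\Tr(w\cdot(q-s_n)/(1+q)) = \frac{q}{1+q}\Tr(w) = \frac{1}{2}\Tr(w)$ for any word $w$ in the first $n-1$ generators. Since $\eta = 1/2$ is exactly the value computed for the $(3,6)$-quotient, $\tau$ is the Markov trace defining $\mH_n(3,6)$, so $\ker\psi \supseteq \Ann(\tau) = \Ann(\tr)$.

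Finally, to get equality of kernels (hence the isomorphism rather than a further quotient), I would compare dimensions. The trace $\Tr$ is faithful on $Q_n$, but we need that its restriction to $\A_n$ is faithful, i.e. that $\A_n$ has no nonzero ideal on which $\Tr$ vanishes; equivalently $\dim \A_n = \dim \mH_n(3,6)$. One route: since $\tau = \tr$ under $\psi$ and $\mH_n(3,6) = \mH_n(q)/\Ann(\tr)$ is by Wenzl's results semisimple with the Hermitian form induced by $\tr$ nondegenerate, the induced form on $\A_n$ is nondegenerate, forcing $\psi$ to factor as an isomorphism $\mH_n(3,6) \xrightarrow{\sim} \A_n$. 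Alternatively one can simply count: $\dim\mH_n(3,6) = \sum_\la (f^\la)^2$ over $(3,6)$-admissible $\la \vdash n$, and check this agrees with $\dim\A_n$ computed from the normal form spanning set modulo the relations $s_i^2 = (q-1)s_i+q$ — though the cleaner argument is the nondegeneracy one, which avoids an explicit count. This dimension-matching step is routine given Wenzl's theorem, so the real work is concentrated in verifying the braid relation.
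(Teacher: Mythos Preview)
Your overall strategy matches the paper's: verify that the $s_i$ satisfy the Hecke relations to obtain a surjection $\psi:\mH_n(q)\twoheadrightarrow\A_n$, pull back $\Tr$ and identify it with the Markov trace $\tr$ by uniqueness, then conclude $\ker\psi=\Ann(\tr)$. Two corrections are in order.

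First, a difference in execution. The paper verifies the braid relation not by expanding both sides but by computing $s_1^{-1}=-\tfrac{q}{2}(1-u_1-v_1-u_1v_1)$ and the conjugation action $s_1^{-1}u_1s_1=u_1v_1$, $s_1^{-1}v_1s_1=u_1$, $s_1^{-1}u_2s_1=u_2v_1$, $s_1^{-1}v_2s_1=-u_1v_1v_2$; relation (B1) drops out, and this same computation is reused verbatim in Lemma~\ref{finitelemma} to show the conjugation action is by signed permutations. Also, your claim that the specific value of $q$ enters only in the braid relation is wrong: already the quadratic relation $(s_i-q)(s_i+1)=0$ forces $q-q^2=1$ (equivalently $q^3=-1$, $q\neq-1$). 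Relatedly, the constant term of $(q-s_i)/(1+q)$ is $(q+\tfrac{1}{2q})/(1+q)=\tfrac{2q-1}{2q(1+q)}$, not $\tfrac{q}{1+q}$; it does equal $\tfrac12$, but only after using $q^2=q-1$.

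Second, your kernel identification has a genuine gap. From $\tau=\tr$ the inclusion that follows immediately is $\ker\psi\subseteq\Ann(\tr)$ (if $\psi(a)=0$ then $\tr(ac)=\Tr(\psi(a)\psi(c))=0$ for all $c$). You have written the reverse inclusion $\ker\psi\supseteq\Ann(\tr)$ at the end of your Markov-trace paragraph, and that is the hard direction, which does not follow from the trace identification alone. Your subsequent attempt to deduce faithfulness of $\Tr|_{\A_n}$ from nondegeneracy of the form on $\mH_n(3,6)$ is circular: transporting nondegeneracy along a map $\mH_n(3,6)\to\A_n$ presupposes $\Ann(\tr)\subseteq\ker\psi$, which is exactly what you are trying to prove. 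The paper closes this by (implicitly, via the notation $a^\dag$) invoking the natural $*$-structure on $Q_n$ with $u_i^*=-u_i$, $v_i^*=-v_i$: one checks $s_i^*=s_i^{-1}$, so $\A_n$ is $*$-closed, and since $\Tr$ is positive (the monomial basis is orthonormal for $(x,y)\mapsto\Tr(xy^*)$) its restriction to $\A_n$ remains faithful. Then $\varphi(a)\neq0$ yields $\Tr(\varphi(a)\varphi(a)^*)\neq0$ with $\varphi(a)^*=\varphi(a^\dag)\in\A_n$, giving the missing inclusion. Your dimension-count alternative would require an independent computation of $\dim\A_n$, which you have not supplied.
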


\begin{proof}

It is a straightforward computation to see that the $s_i$ satisfy
\begin{enumerate}
 \item[(B1)] $s_is_{i+1}s_i=s_{i+1}s_is_{i+1}$
\item[(B2)] $s_js_i=s_is_j$ if $|i-j|\geq 2$
\item[(E1)] $(s_i-q)(s_i+1)=0$
\end{enumerate}
Indeed, relation (B2) is immediate from relations (G3) and (G4).  It is enough to check (B1) and (E1) for $i=1$.  For this we compute:
\begin{eqnarray}
&&s_1^{-1}=-\frac{q}{2}(1-u_1-v_1-u_1v_1)\nonumber\\
&&s_1^{-1}u_1s_1=u_1v_1,\quad s_1^{-1}v_1s_1=u_1,\label{action1}\\
&&s_1^{-1}u_2s_1=u_2v_1,\quad s_1^{-1}v_2s_1=-u_1v_1v_2\nonumber
\end{eqnarray}
from which (B1) and (E1) are deduced.

Thus $\varphi(g_i)=s_i$ induces an algebra homomorphism $\varphi:\mH_n(q)\rightarrow Q_n$ with $\varphi(\mH_n(q))=\A_n$.  Set $f_i:=\varphi(e_i)=\frac{(q-s_i)}{(1+q)}$ and let $b\in Q_{n-1}$ that is, $b$ is in the span of the words in $\{u_1,v_1,\ldots,u_{n-2},v_{n-2}\}$.  The constant term of $f_{n-1}b$ is the product of the constant terms of $b$ and $f_{n-1}$ since $f_{n-1}$ is in the span of $\{1,u_{n-1},v_{n-1},u_{n-1}v_{n-1}\}$, so $\Tr(f_{n-1}b)=\Tr(f_{n-1})\Tr(b)$. For each $a\in\mH_n(q)$ we define $\varphi^{-1}(\Tr)(a):=\Tr(\varphi(a))$, and conclude that $\varphi^{-1}(\Tr)$ is a Markov trace on $\mH_n(q)$.  Computing, we see that $\Tr(f_{n-1})=1/2$ so that by uniqueness $\varphi^{-1}(\Tr)=\tr$ as functionals on $\mH_n(q)$.  Now if $a\in\ker(\varphi)$ we see that $\tr(ac)=\Tr(\varphi(ac))=0$ for any $c$ so that $\ker(\varphi)\subset\Ann(\tr)$.  On the other hand, if $a\in\Ann(\tr)$ we must have $\Tr(\varphi(ac))=\tr(ac)=0$ for all $c\in\mH_n(q)$.  If $\varphi(a)\neq 0$ then, by definition of $\Tr$ and $\varphi$, there exists an $a^\dag\in\mH_n(q)$ such that $\Tr(\varphi(a)\varphi(a^\dag))\neq 0$ since $\Tr$ is faithful.  Therefore  $\Ann(\tr)\subset\ker(\varphi)$.  In particular, we see that $\varphi$ induces:
$$\mH_n(3,6)=\mH_n(q)/\Ann(\tr)\cong \varphi(\mH_n(q))=\A_n\subset Q_n.$$
\end{proof}

\begin{lemma}\label{finitelemma}
The group $G_n$ generated by $s_1,\cdots,s_{n-1}$ is finite.
\end{lemma}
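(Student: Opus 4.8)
The plan is to use the finite subgroup $N:=\langle u_1,v_1,\ldots,u_{n-1},v_{n-1}\rangle$ of $Q_n^\times$, which $G_n$ normalizes, and then to handle the leftover central (scalar) ambiguity by a determinant argument.

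First I would record that $N$ is finite: by the uniqueness of the normal form~(\ref{normform}), $N$ consists exactly of the elements $\pm u_1^{\ep_1}\cdots v_{n-1}^{\nu_{n-1}}$, so $|N|=2^{2n-1}$. Since $u_i,v_i$ generate $Q_n$ as a $\C$-algebra, $Q_n$ is a quotient of the (semisimple) group algebra $\C N$, hence semisimple; fix an isomorphism $Q_n\cong\bigoplus_j M_{d_j}(\C)$ with projections $\rho_j$. Next I would verify that each $s_i$ normalizes $N$. Because $s_i^{-1}=-\tfrac{q}{2}\bigl(1-u_i-v_i-u_iv_i\bigr)$ exists (the computation already made for $i=1$ in~(\ref{action1})), conjugation by $s_i$ is an algebra automorphism of $Q_n$; the formulas in~(\ref{action1}), together with their evident analogues for general $i$ and the fact that $s_i$ commutes with $u_j,v_j$ whenever $|i-j|\geq 2$ (by (G3)--(G4)), show that this automorphism sends every generator of $N$ into $N$. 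As $N$ is finite, $s_iNs_i^{-1}=N$, so conjugation defines a homomorphism $\pi\colon G_n\to\operatorname{Aut}(N)$ into a finite group.

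It then remains to control $C:=\ker\pi$. Since the $u_i,v_i$ span $Q_n$, an element of $Q_n^\times$ commuting with all of $N$ lies in $Z(Q_n)$, so $C=G_n\cap Z(Q_n)^\times$ is abelian; and because $\operatorname{Aut}(N)$ is finite, $C$ has finite index in $G_n$ and hence (as $G_n$ is generated by the finitely many $s_i$) is finitely generated. To see that $C$ is actually finite I would show that every $g\in C$ has finite order. By relation (E1) the minimal polynomial of $s_i$ divides $(x-q)(x+1)$, so the eigenvalues of $\rho_j(s_i)$ and of $\rho_j(s_i^{-1})$ lie in $\{q,q^{-1},-1\}$, all roots of unity; thus $\det\rho_j\colon G_n\to\C^\times$ has image in a finite group of roots of unity. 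For $g\in C$ we have $\rho_j(g)=\zeta_j I_{d_j}$ for some $\zeta_j\in\C^\times$, whence $\zeta_j^{d_j}=\det\rho_j(g)$ is a root of unity and so is $\zeta_j$. Since $\bigoplus_j\rho_j$ is faithful, $g$ has finite order. Therefore $C$ is a finitely generated abelian torsion group, hence finite, and $[G_n:C]\leq|\operatorname{Aut}(N)|<\infty$ forces $G_n$ to be finite.

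The step I expect to be the crux is the last one: $N$ only sees $G_n$ up to central (block‑scalar) units, and a priori $G_n$ could contain such units of infinite order, so some arithmetic input is genuinely needed — here the fact, immediate from (E1), that the generators $s_i$ have only roots of unity as eigenvalues. (Wenzl's $C^*$-structure shows that $G_n$ is conjugate to a unitary group, but unitarity by itself does not bound the order of a scalar, so the determinant computation still appears essential.) The only other thing to check carefully is the general-$i$ analogue of~(\ref{action1}), which is the same computation as the displayed case $i=1$ and poses no real difficulty.
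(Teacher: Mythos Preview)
Your argument is correct and matches the paper's strategy exactly: conjugation on the monomial group $N$ yields a homomorphism to a finite group with central kernel, and a determinant argument using (E1) shows that central elements of $G_n$ have finite order. The only cosmetic difference is that the paper first computes $Z(Q_n)$ explicitly (via linear algebra over $\Z_2$, finding it to be $1$- or $4$-dimensional according as $3\nmid n$ or $3\mid n$) before restricting to simple blocks, whereas you bypass this by working directly with an abstract semisimple decomposition of $Q_n$.
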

\begin{proof}

  Consider the conjugation action of the $s_i$ on $Q_n$.  We claim that the conjugation action of $s_i$ on the words in the $u_i,v_i$ is by a signed permutation.  Since $s_i$ commutes with words in $u_j,v_j$ with $j\not\in\{i-1,i,i+1\}$, by symmetry it is enough to consider the conjugation action of $s_1$  on the four elements $\{u_1,v_1,u_2,v_2\}$, which is given in (\ref{action1}).

Thus we see that $G_n$ modulo the kernel of this action is a (finite) signed permutation group.
The kernel of this conjugation action lies in the center $Z(Q_n)$ of $Q_n$.  Using the normal form above we find that the center $Z(Q_n)$ is either $1$-dimensional or $4$-dimensional.  Indeed, since the words:
$$W:=\{u_1^{\ep_1}\cdots u_{n-1}^{\ep_{n-1}}v_1^{\nu_1}\cdots v_{n-1}^{\nu_{n-1}}\}$$
for $(\ep_1,\ldots,\ep_{n-1},\nu_1,\ldots,\nu_{n-1})\in\Z_2^{2n-2}$ form a basis for $Q_n$ and $tw=\pm wt$ for $w,t\in W$ we may explicitly compute a basis for the center as those words $w\in W$ that commute with $u_i$ and $v_i$ for all $i$.  This yields two systems of linear equations over $\Z_2$:
\begin{equation}\label{eqnmod2u}
\begin{cases} \ep_1+\ep_2=0,&\\
\ep_{i}+\ep_{i+1}+\ep_{i+2}=0, & 1\leq i\leq n-3 \\
\ep_{n-2}+\ep_{n-1}=0 &\end{cases}
\end{equation} and
\begin{equation}\label{eqnmod2v}
 \begin{cases} \nu_1+\nu_2=0 &\\
\nu_{i-1}+\nu_{i}+\nu_{i+1}=0, & 1\leq i\leq n-3\\
\nu_{n-2}+\nu_{n-1}=0.&\end{cases}
\end{equation}
Non-trivial solutions to (\ref{eqnmod2u}) only exist if $3\mid n$ since we must have $\ep_1=\ep_2=\ep_{n-2}=\ep_{n-1}=1$ as well as $\ep_i=0$ if $3\mid i$ and $\ep_j=1$ if $3\nmid j$ and similarly for (\ref{eqnmod2v}).  Thus $Z(Q_n)$ is $\C$ if $3\nmid n$ and is spanned by $1,U,V$ and $UV$ where
$U=\prod_{3\nmid i} u_i$ and $V=\prod_{3\nmid i} v_i$ if $3\mid n$.
The determinant of the image of $s_i$ under any representation is a $6$th root of unity and hence the same is true for any element $z\in Z(Q_n)\cap G_n$.  Thus for $3\nmid n$ the image of any $z\in Z(Q_n)\cap G_n$ under the left regular representation is a root of unity times the identity matrix, and thus has finite order.  Similarly, if $3\mid n$, the restriction of any $z\in Z(Q_n)\cap G_n$ to any of the four simple components of the left regular representation is a root of unity times the identity matrix and so has finite order.  So the group $G_n$ itself is finite.
\end{proof}
This completes the proof of Theorem \ref{mainthm}.

\begin{remark}  The proof of Lemma \ref{finitelemma} shows that the projective image of $G_n$ is a (non-abelian) subgroup of the full monomial group $G(2,1,4^{n-1})$ of signed $4^{n-1}\times 4^{n-1}$ matrices.  The main goal of this paper is to verify \cite[Conjecture 6.6]{RSW} in this case, but with further effort one could determine the group $G_n$ more precisely.  It is suggested in \cite{LR} that $G_n$ is an extension of $PSU(n-1,\mF_2)$ so that $$|G_n|\approx \frac{1}{3}2^{(n-1)(n-2)/2}\prod_{i=1}^{n-1}(2^i-(-1)^i)$$ but that such a result has not appeared in print.  Modulo the center, the generators $s_i$ have order $3$ so that $G_n/Z(G_n)$ is a quotient of the factor group $\B_n/\lan\sigma_1^3\ra$ (here $\sigma_i$ are the usual generators of $\B_n$).  For $n\leq 5$, Coxeter \cite{cox} has shown that these quotients are finite groups and determined their structure.  In particular, the projective image of $\B_5/\lan \sigma_1^3\ra$ is $PSU(4,\mF_2)$, so $G_5$ is an extension of this simple group.  A strategy for showing $G_n$ is an extension of $PSU(n-1,\mF_2)$ for $n>5$ would be to find an $(n-1)$-dimensional invariant subspace of $Q_n$ so that the restricted action of the braid generators is by order $3$ pseudo-reflections (projectively).  A comparison of the dimensions of the simple $\mH_n(2,6)$-modules with those of $PSU(n-1,\mF_2)$ indicates that one must also restrict to those $n$ not divisible by $3$.
\end{remark}

\section{Concluding Remarks, Questions and Speculations}\label{disc}
The category $\CC(\ssl_3,6)$ does not seem to have any obvious generalizations.  We discuss some of the ways in which $\CC(\ssl_3,6)$ appears to be exceptional by posing a number of (somewhat na\"ive) questions which we expect to have negative answers.
\subsection{Link Invariants}
From any modular category one obtains (quantum) link invariants via Turaev's approach \cite{Tur}.
The link invariant $P_L^\prime(q,\eta)$ associated with $\CC(\ssl_k,\ell)$ is (a variant of) the HOMFLY-PT polynomial (\cite{FYHLMO}, where a different choice of variables is used).  For the choices $q=e^{2\pi \im/6}$ and $\eta=1/2$ corresponding to $\CC(\ssl_3,6)$ the invariant has been identified \cite{LM}:
$$P_L^\prime(e^{2\pi \im/6},1/2)=\pm\im(\sqrt{2})^{\dim H_1(T_L;\Z_2)}$$ where $T_L$ is the triple cyclic cover of the three sphere $S^3$ branched over the link $L$.  There is a similar series of invariants for any odd prime $p$: $\pm\im(\sqrt{p})^{\dim H_1(D_L;\Z_p)}$ where $D_L$ is the double cyclic cover of $S^3$ branched over $L$ (see \cite{LM,GJ}).  It appears that this series of invariants can be obtained from modular categories $\CC(\mathfrak{so}_{p},2p)$.  This has been verified for $p=3,5$ (see \cite{GJ} and \cite{jones89}) and we have recently handled the $p=7$ case (unpublished, using results in \cite{West}).
\begin{question}  Are there modular categories with associated link invariant:
$$\pm\im (\sqrt{p})^{\dim H_1(T_L;\Z_p)}?$$
\end{question}
In \cite{LRW} it is suggested that if the braid group images corresponding to some ribbon category are finite then the corresponding link invariant is \emph{classical}, i.e. equivalent to a homotopy-type invariant.  Another formulation of this idea is found in \cite{twoparas} in which \emph{classical} is interpreted in terms of computational complexity.

\subsection{Fusion Categories and $II_1$ Factors}
The category $\CC(\ssl_3,6)$ is an \emph{integral} fusion category, that is, the simple objects have integral dimensions.  The categories $\CC(\ssl_k,\ell)$ are integral for $(k,\ell)=(3,6)$ and $(k,k+1)$ but no other examples are known (or believed to exist).   $\CC(\ssl_3,6)$ has six simple (isomorphism classes of) objects: $\{X_i,X_i^*\}_{i=1}^3$ of dimension $2$ (dual pairs), three simple objects $\one,Z,Z^*$ of dimension $1$, and one simple object $Y$ of dimension $3$.  The Bratteli diagram for tensor powers of the generating object $X_1$ is given in Figure \ref{brat}.  It is shown in \cite{ENO} that $\CC$ is an integral fusion category if, and only if, $\CC\cong \Rep(H)$ for some semisimple finite dimensional quasi-Hopf algebra $H$, so in particular $\CC(\ssl_3,6)\cong\Rep(H)$ for some quasi-triangular quasi-Hopf algebra $H$.  One wonders if strict coassociativity can be achieved:
\begin{question} Is there a (quasi-triangular) semisimple finite dimensional Hopf algebra $H$ with $\CC(\ssl_3,6)\cong\Rep(H)$?
\end{question}
Other examples of integral categories are the representation categories $\Rep(D^\om G)$ of twisted doubles of finite groups studied in \cite{ERW} (here $G$ is a finite group and $\om$ is a $3$-cocycle on $G$).  Any fusion category $\CC$ with the property that its Drinfeld center $\mathcal{Z}(\CC)$ is equivalent as a braided fusion category to $\Rep(D^\om G)$ for some $\om,G$ is called \emph{group-theoretical} (see \cite{ENO,Nat}).  The main result of \cite{ERW} implies that if $\CC$ is any braided group-theoretical fusion category then the braid group representations obtained from $\CC$ must have finite image.  In \cite{NR} we showed that $\CC(\ssl_3,6)$ is not group-theoretical and in fact has minimal dimension ($36$) among non-group-theoretical integral modular categories.
\begin{question}
Is there a family of non-group-theoretical integral modular categories that includes $\CC(\ssl_3,6)$?
\end{question}
\begin{figure}[t0]
$\xymatrix{ X_1\ar@{-}[d]\ar@{-}[dr]\\  X_1^*\ar@{-}[d]\ar@{-}[dr] & X_2\ar@{-}[d]\ar@{-}[dr] \\
 \one\ar@{-}[d] & Y\ar@{-}[dl]\ar@{-}[d]\ar@{-}[dr] & Z\ar@{-}[d] \\ X_1\ar@{-}[d]\ar@{-}[dr] & X_2^*\ar@{-}[dl]\ar@{-}[dr]& X_3\ar@{-}[dl]\ar@{-}[d]\\ X_1^*\ar@{-}[d]\ar@{-}[dr] & X_2\ar@{-}[d]\ar@{-}[dr] & X_3^*\ar@{-}[dl]\ar@{-}[dr]\\
\one\ar@{-}[d] & Y\ar@{-}[dl]\ar@{-}[dr]\ar@{-}[d] & Z\ar@{-}[d] & Z^*\ar@{-}[dll]\\
X_1&X_2^*&X_3}$
\caption{Bratteli diagram for $\CC(\ssl_3,6)$}\label{brat}
\end{figure}

Notice that $\CC(\ssl_3,6)$ has a ribbon subcategory $\D$ with simple objects $\one, Z,Z^*$ and $Y$.  The fusion rules are the same as those of $\Rep(\mathfrak{A}_4)$: $Y\ot Y\cong\one\oplus Z\oplus Z^*\oplus Y$.  However $\D$ is not symmetric, and $\CC(\ssl_3,6)$ has smallest dimension among modular categories containing $\D$ as a ribbon subcategory (what M\"uger would call a \emph{minimal modular extension} \cite{Mu}).  One possible generalization of  $\CC(\ssl_3,6)$ would be a minimal modular extension of a non-symmetric ribbon category $\D_n$ similar to $\D$ above.  That is, $\D_n$ should be a non-symmetric ribbon category with $n$ $1$-dimensional simple objects $\one=Z_0,\ldots,Z_{n-1}$ and one simple $n$-dimensional object $Y_n$ such that $Y_n\ot Y_n\cong Y_n\oplus\bigoplus_{i=0}^{n-1} Z_i$ and the $Z_i$ have fusion rules like $\Z_n$.  For $\D_n$ to exist even at the generality of fusion categories one must have $n=p^\alpha-1$ for some prime $p$ and integer $\alpha$ by \cite[Corollary 7.4]{EGO}.  However, V. Ostrik \cite{os} informs us that these categories do not admit non-symmetric braidings except for $n=2,3$.  So this does not produce a generalization.

A pair of hyperfinite $II_1$ factors $A\subset B$ with index $[B:A]=4$ can be constructed from $\CC(\ssl_3,6)$ (see \cite[Section 4.5]{wenzlcstar}).  The corresponding principal graph is the Dynkin diagram $E_6^{(1)}$ the nodes of which we label by simple objects:
$$\xymatrix{&& Z^* \ar@{-}[d] \\ && X_3\ar@{-}[d]\\
\one\ar@{-}[r]&X_1\ar@{-}[r]&Y\ar@{-}[r]&X_2^*\ar@{-}[r]& Z}$$
This principal graph can be obtained directly from Bratteli diagram in Figure \ref{brat} as the nodes in the $6$th and $7$th levels and the edges between them.
Hong \cite{Ho} showed that any $II_1$ subfactor pair
$M\subset N$ with principal graph $E_6^{(1)}$ can be constructed from some $II_1$ factor $P$ with an outer action of $\mathfrak{A}_4$ as $M=P\rtimes \Z_3\subset P\rtimes \mathfrak{A}_4=N$.  Subfactor pairs with principal graph $E_7^{(1)}$ and $E_8^{(1)}$ can also be constructed (see eg. \cite{popa}).
We ask:
\begin{question}\label{q:e7e8} Is there a unitary non-group-theoretical integral modular category with principal graph $E_7^{(1)}$ or $E_8^{(1)}$?
\end{question}
Even a braided fusion category with such a principal graph would be interesting, and have interesting braid group image.

Notice that the subcategory $\D$ mentioned above plays a role here as $\mathfrak{A}_4$ corresponds to the Dynkin diagram $E_6^{(1)}$ in the McKay correspondence.  A modular category $\CC$ with principal graph $E_7^{(1)}$ (resp. $E_8^{(1)}$) would contain a ribbon subcategory $\F_1$ (resp. $\F_2$) with the same fusion rules as $\Rep(\mathfrak{S}_4)$ (resp. $\Rep(\mathfrak{A}_5)$).  Using \cite[Lemma 1.2]{EG} we find that such a category $\CC$ must have dimension divisible by $144$ (resp. $3600$).  The ribbon subcategory $\F_2$ must have symmetric braiding (D. Nikshych's proof: $\Rep(\mathfrak{A}_5)$ has no non-trivial fusion subcategories so if it has a non-symmetric braiding, the M\"uger center is trivial.  But if the M\"uger center is trivial it is modular, which fails by \cite[Lemma 1.2]{EG}).  This suggests that for $E_8^{(1)}$ the answer to Question \ref{q:e7e8} is ``no.''  There is a non-symmetric choice for $\F_1$ (as V. Ostrik informs us \cite{os}), with M\"uger center equivalent to $\Rep(\mathfrak{S}_3)$.  By \cite[Prop. 5.1]{Mu} a minimal modular extension $\CC$ of such an $\F_1$ would have dimension $144$.

\end{document}